\def\csat {c_{\textnormal{sat}}}
\def\csp {c_{\textnormal{sp}}}
\begin{document}
\title{Block-adaptive Cross Approximation of Discrete Integral Operators}
\author{M. Bauer and M. Bebendorf\footnote{Mathematisches Institut, Universit\"at Bayreuth, 95447 Bayreuth, Germany}}
\date{\today}
\maketitle

\begin{abstract}
In this article we extend the adaptive cross approximation (ACA) method known for the efficient approximation of discretisations of
integral operators to a block-adaptive version. While ACA is usually employed to assemble hierarchical matrix approximations
having the same prescribed accuracy on all blocks of the partition, for the solution of linear systems it may be more
efficient to adapt the accuracy of each block to the actual error of the solution as some blocks may be more
important for the solution error than others. To this end, error estimation techniques known from adaptive mesh refinement are
applied to automatically improve the block-wise matrix approximation. This allows to interlace the assembling of the coefficient matrix
with the iterative solution.
\end{abstract}
\textbf{Keywords:} ACA, error estimators, BEM, non-local operators, hierarchical matrices, fast solvers

\section{Introduction}
Methods for the solution of boundary value problems of elliptic partial differential equations typically involve non-local operators.
Examples are fractional diffusion problems (see~\cite{ag2017, npv12, dg13}) and boundary integral methods (see~\cite{SaSc11, os08}),
in which amenable boundary value problems are reformulated as integral equations over the boundary of the computational domain.
The discretisation of such non-local operators usually leads to fully populated matrices. The corresponding systems of linear equations
are quite expensive to solve numerically in the typical case of large numbers of unknowns. Additionally, assembling and storing the discrete operators may
be quite costly. 
Even if the formulation of the boundary value problem is local and its discretisation (e.g.\ by the finite element method) leads to a sparse matrix,
it may be useful to construct non-local approximations (e.g.\ for preconditioning) to its inverse or to triangular factorisations.
In both cases, suitable techniques have to be employed if the boundary value problem is to be solved with linear or almost linear
complexity.

There are several ways how fully populated discretisations of elliptic operators can be treated efficiently.
In addition to \textit{fast multipole methods}~\cite{gr87} or \textit{mosaic decomposition methods}~\cite{t96},
\textit{hierarchical matrices} ($\mathcal{H}$-matrices)~\cite{hackbusch99, hk00} have been introduced for the asymptotic optimal numerical
treatment. This kind of methods can be regarded to be based on a hierarchical partition of the discretised operator $A \in \R^{N \times N}$ 
into suitable blocks combined with a local low-rank approximation. Hence, fully populated matrices can be approximated by data-sparse
representations which require only a logarithmic-linear amount of storage. Furthermore, these representations can be applied to vectors with logarithmic-linear complexity.
In addition, $\Hm$-matrices provide approximate substitutes for the usual matrix operations including the LU~factorisation
with logarithmic-linear complexity; see~\cite{bebendorf08}.
These properties strongly accelerate iterative solvers like the \textit{conjugate gradient}~(CG) method or GMRES (see~\cite{saad03}) for the solution of the
arising linear systems of equations.

A crucial step for the efficient treatment of fully populated matrices $A$ is their assembling. While explicit kernel approximations were
used in the early stages of fast methods for the treatment of integral operators, in recent years the \textit{adaptive cross approximation}~(ACA) (see~\cite{bebendorf00, br03})
has become quite popular. The latter method employs only few of the original matrix entries of~$A$ for its data-sparse
approximation by hierarchical matrices. Since one is usually interested in approximations $\tilde A$ of~$A$ leading to a sufficiently
accurate result when $\tilde A$ or its inverse is applied to arbitrary vectors, it is common to
approximate all blocks of the matrix partition uniformly with a prescribed accuracy.
The universality of the approximation to the discrete operator~$A$ comes at a high price due to the generation of information that may be redundant for the solution of the linear
system.
When the quantity of interest is the error of the solution~$x$ of the linear system $Ax=b$, $\tilde A$ obtained from usual ACA may not be the ideal
approximation to~$A$. In such a situation, certain parts of the discrete operator~$A$ may be more important than others. Conversely,
depending on the right-hand side~$b$ some blocks may not be that important for the error of~$x$.

In this article we propose to construct the hierarchical matrix approximation in a block-adaptive
way, i.e., in addition to the adaptivity of ACA on each block we add another level of adaptivity to the
construction of the hierarchical matrix. This variant of ACA will be referred to as BACA.
In order to find an $\Hm$-matrix approximation that is better suited for the solution error,
we employ techniques known from adaptivity together with suitable error estimators.
The strategy of error estimation is well known in the context of numerical methods for partial differential or integral equations. 
 Adaptive methods usually focus on mesh refinement. 
Here, these strategies will be used to successively improve
block-wise low-rank approximations, whereas the underlying grid (and the hierarchical block structure) does not change.
Although the ideas of this approach are presented in connection with ACA, they can also be applied to other low-rank
approximation techniques.

In contrast to constructing the approximation~$\tilde A$ of~$A$ the usual way and solving a single linear
system~$\tilde Ax=b$, we generate a sequence of $\Hm$-matrix approximations~$A_k$ to~$A$ and solve
each linear system~$A_kx_k=b$ for~$x_k$. At first glance this seems to be more costly than solving $\tilde Ax=b$ only once.
However, the construction of the next approximation~$A_{k+1}$ can be steered by the approximate solution~$x_k$, and $x_{k+1}$ can be
efficiently computed as an update of~$x_k$. Furthermore, the accuracy of~$x_k$ in the iterative solver can be adapted to the error estimator.
This allows to interlace the assembling process of the matrix with the iterative solution of the linear system. In addition to reducing 
the numerical effort for assembling and storing the matrix, this approach
has the practical advantage that the block-wise accuracy of ACA is not a required parameter any more. Choosing this parameter in usual ACA
approximations is not obvious as the relation between the block-wise accuracy and the error of the solution usually depends on the
respective problem. With the new approach presented in this article the accuracy of the approximation is automatically adapted.

The article is organised as follows. At first the model problem is formulated for which the new method will be explained.
After that we give a brief overview of hierarchical matrices and ACA as these methods are the foundations of our new method.
Suitable error estimators will be proposed and investigated with respect to efficiency and reliability in section~four. Using these, a marking and refinement strategy
will be presented and analysed. Numerical examples will be presented at the end of the article which show that the new method is more
efficient than ACA with respect to both computational time and required storage in situations where the solution in some sense contains structural differences or situations
that result from locally over-refined meshes.

\section{Model Problem}\label{sec:two}
As a model problem we consider the Laplace equation on a Lipschitz domain~$\Omega \subset \R^d$ (or its complement $\Omega^c:=\R^d\setminus\overline\Omega$), i.e.
\begin{equation} \label{mp}
 \begin{split}
  -\Delta u &= 0 \quad \text{in } \Omega \text{ (or }\Omega^c), \\
  u &= g \quad \text{on } \partial\Omega,
 \end{split}
\end{equation}
where the Dirichlet data $g$ is a given function in the trace space
$H^{1/2}(\partial \Omega)$ of the Sobolev space $H^1(\Omega)$. This kind of problem (in particular the exterior boundary value problem with suitable conditions at infinity) can be treated 
via boundary integral formulation using the fundamental solution
\begin{equation*}
 S(x) = \begin{cases} -\frac{1}{2\pi} \ln |x|, & d = 2, \\
  \frac{1}{(d-2)\omega_{d}} |x|^{2-d}, & d > 2,
  \end{cases}
\end{equation*}
for $x \in \R^d\ohne\{0\}$, where $\omega_d$ denotes the area of the $d$-dimensional unit sphere.
Defining the operators 
\begin{equation*}
 \begin{split}
  \op{V}\psi(x) &:= \int_{\partial \Omega} \psi(y)\, S(x-y) \ud s_y \quad \text{(single layer potential)},\\
  \op{K}\phi(x) &:= \int_{\partial \Omega} \phi(y) \,\partial_{n(y)}S(x-y) \ud s_y \quad \text{(double layer potential)},
 \end{split}
\end{equation*}
the solution of \eqref{mp} is given via the \textit{representation formula}
\[
u(x)=\op{V}\psi(x)-\op{K}g(x),\quad x\in\R^d\setminus\partial\Omega,
\]
where the Neumann data $\psi\in H^{-1/2}(\partial \Omega)$ is the solution of the boundary integral equation 
\begin{equation} \label{bp}
 \op{V}\psi = \left(\frac{1}{2}\op{I}+\op{K}\right)g \quad \text{on } \partial \Omega.
\end{equation}
If we extend the $L^2$-scalar product to a duality pairing between $H^{-1/2}(\partial \Omega)$ and $H^{1/2}(\partial \Omega)$, the boundary integral equation~\eqref{bp}
can be stated in variational form as
\begin{equation*}
 \scp{\op{V}\psi}{\psi'}_{L^2} = \scp{\left(  \frac{1}{2}\op{I}+\op{K} \right)g}{\psi'}_{L^2}, \quad \psi'
 \in H^{-1/2}(\partial \Omega).
\end{equation*}
Due to the coercivity of the bilinear form $\scp{\op{V}\cdot}{\cdot}_{L^2}$, the Riesz-Fischer theorem yields the existence of the unique solution $\psi \in H^{-1/2}(\partial \Omega)$.

In order to solve \eqref{bp} or its variational form numerically, Galerkin discretisations are often used. In the case $d=3$, the boundary $\partial \Omega$ is decomposed into a regular partition $\mathcal{T}$ consisting of $N$~triangles.
Let the set $\{\psi_{1},\ldots,\psi_{N}\}$ denote a basis of the the space of piecewise constant functions~$\mathcal{P}_{0}(\mathcal{T})$.
Then for $\psi = \sum_{j = 1}^{N} x_{j}\psi_{j}$, $x_{j} \in\R$, the discretisation of the variational problem reads
\begin{equation*}
\begin{split}
\scp{\left(\frac{1}{2}\op{I}+\op{K}\right)g}{\psi_{i}}_{L^2} &= \scp{\op{V}\psi}{\psi_{i}}_{L^2}
= \int_{\partial \Omega} \left[ \int_{\partial \Omega} S(x-y) \psi(y) \ud s_y\right] \psi_{i}(x) \ud s_x \\
&= \sum_{j = 1}^N x_{j}\left[ \int_{\partial \Omega} \int_{\partial \Omega} S(x-y) \psi_{j}(y) \psi_{i}(x) \ud s_y \ud s_x\right] , \quad i=1,\dots,N,
\end{split}
\end{equation*}
which due to the Riesz-Fischer theorem is uniquely solvable, too.
With
\begin{equation*}
 a_{ij} := \int_{\partial \Omega} \int_{\partial \Omega} S(x-y) \psi_{j}(y) \psi_{i}(x) \ud s_y \ud s_x \quad\text{and}\quad
  b_{i} := \scp{\left(  \frac{1}{2}\op{I}+\op{K} \right)g}{\psi_{i}}_{L^2}, \quad i,j = 1,...,N,
\end{equation*}
the above stated discretisation results in the system of linear equations
\begin{equation}\label{eq:Axb}
 A x = b, \quad A \in \R^{N \times N}, \; b \in \R^{N}.
\end{equation}
Since  the matrix $A$ is fully populated, our goal is to reduce the required storage for $A$ and the numerical effort for its construction to logarithmic-linear complexity.
To this end, hierarchical matrices  will be employed; see Sect.~\ref{sec:hmat}.
The efficiency of hierarchical matrices is based on a blockwise low-rank approximation.
The construction of these low-rank approximations is often done via adaptive cross approximation; see Sect.~\ref{sec:aca}.
While in hierarchical matrix approximations all blocks are commonly approximated with the same prescribed
accuracy, in this article we propose to construct the hierarchical matrix approximation in a block-adaptive
way, i.e., in addition to the adaptivity of ACA on each block we add another level of adaptivity to the
construction of the hierarchical matrix.

Although we consider the model problem~\eqref{mp}, the method presented in this article can be applied in more general situations as long as the matrix $A$ results from finite element discretisation of an integral representation of the problem.
Examples are the Helmholtz equation, Lam\'e equations, the Lippmann-Schwinger equation, fractional diffusion problems, the Gauss transform and many others.

\section{Hierarchical Matrices and the Adaptive Cross Approximation} \label{sec:hmat}
This section gives a short overview of the \textit{hierarchical matrix}~($\Hm$-matrix) structure and the ACA method, which we are going to extend. $\Hm$-matrices
were introduced by Hackbusch~\cite{hackbusch99} and Hackbusch and Khoromskij~\cite{hk00}. Their efficiency is based on a suitable hierarchical partitioning of the matrix into sub-blocks
and the blockwise approximation by low-rank matrices.

We consider matrices $A \in \R^{N \times N}$ whose entries 
result from the discretisation of a non-local operator such as the boundary integral formulation of the boundary value problem described in Sect.~\ref{sec:two}.
Although $A$ is fully populated, it can often be approximated by low-rank matrices on suitable blocks~$t\times s$ consisting of rows and columns~$t,s \subset I:= \{1,\dots,N\}$, of $A$, i.e.
\begin{equation*}
 A_{ts} \approx UV^{T}, \quad U \in \R^{t \times r}, \; V \in \R^{s \times r},
\end{equation*}
where the rank $r$ is small compared with $|t|$ and $|s|$. If $A$ discretises
an integral representation or the inverse of second-order elliptic partial differential
operators, then the condition 
\begin{equation}\label{eq:adm}
 \min\{ \diam\, X_{t}, \diam\, X_{s}\} < \beta \, \dist(X_{t},X_{s}), \quad \beta > 0,
\end{equation}
on the block $t\times s$ turns out to be appropriate; see \cite{bebendorf05}. Here, $X_t$ is the union of the
supports $X_{i} := \suppa\psi_{i}$, $i \in t$, and
\begin{equation*}
 \diam\, X = \sup_{x,y \in X} |x-y| \quad \text{and} \quad \dist(X,Y) = \inf_{x \in X,\, y \in Y} |x-y|
\end{equation*}
denote the diameter and the distance of two bounded sets $X, Y \subset \R^{d}$.
All blocks $t \times s$ satisfying this condition are called \textit{admissible}; see Fig.~\ref{fig:clusCoil}. 

\begin{figure}\centering
\includegraphics[angle=90,scale=0.25]{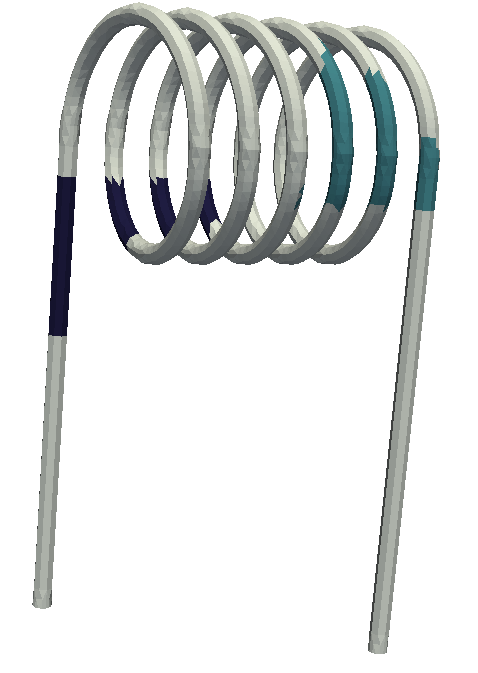}
 \caption{Two clusters $X_{t}$ and $X_{s}$ corresponding to an admissible block $t \times s$.}\label{fig:clusCoil}
\end{figure}
A partition $P$ of the matrix indices $I\times I$ is usually constructed using \textit{cluster trees}, i.e.\ trees $T_I$ satisfying the conditions
\begin{itemize}
 \item[(1)] $I$ is the root of the tree,
 \item[(2)] if $v \in T_I$ is not a leaf, then $v$ has sons $v_{1}, \dots, v_{s} \in T_I$ so that $v = v_{1} \dot\cup\ldots\dot\cup v_{s}$. 
\end{itemize}
The $\ell$-th level of $T_I$ will be referred to as $T_I^{(\ell)}$, $\ell=0,\dots,L$, where $L$ is the depth of $T_I$.
The set of leaves of the tree $T_I$ is denoted by $\mathcal{L}(T_{I})$.
Given a cluster tree~$T_I$ for $I$, a block cluster tree $T_{I\times I}$ for $I\times I$ is constructed by
recursive subdivision descending the cluster tree $T_I$ for the rows and for
the columns concurrently until \eqref{eq:adm} is satisfied or one the clusters cannot be subdivided any further.
Hence, a matrix partition consisting of blocks either satisfying~\eqref{eq:adm} or being small can be found as the leaves~$P:=\op{L}(T_{I\times I})$ of $T_{I\times I}$.
$P$ is the union of the sets of admissible and non-admissible blocks $P_\textnormal{adm}$ and $P_\textnormal{non-adm}$, respectively.
The \textit{sparsity constant} $c_{\textnormal{sp}}$ is defined as
\begin{equation*}
 c_{\textnormal{sp}} := \max\left\{ \max\limits_{t \in T_{I}} c_{\textnormal{sp}}^{r}(t), 
 \max\limits_{s \in T_{I}} c_{\textnormal{sp}}^{c}(s)\right\},
\end{equation*}
where \vspace*{-0.4cm}
\begin{equation*}
 \begin{split}
  c_{\textnormal{sp}}^{r}(t) &:= |\{ s \subset I : t \times s \in  P\}| 
   \end{split}
\end{equation*}
denotes the maximum number of blocks $t \times s$ contained in $P$ for a given cluster $t \in T_{I}$ and
 \begin{align*}
  c_{\textnormal{sp}}^{c}(s) &:=|\{ t \subset I  : t \times s \in P\}|
 \end{align*}
the maximum number of blocks $t \times s \in P$ for a cluster $s \in T_{I}$.
For more details on cluster trees (in particular for their construction) the reader is referred to~\cite{bebendorf08}.
 
Given a matrix partition $P$, we define the set of $\Hm$-matrices with blockwise rank at most~$r$ by
\begin{equation*}
 \Hm(P, r) := \left\{ M \in \R^{I \times I} : \rank\,  M_b \leq r \text{ for all } b \in P_\textnormal{adm} \right\}.
\end{equation*}
Due to the structure of hierarchical matrices and due to the fact that each block $t \times s$ has complexity $r(|t| + |s|)$, the complexity of the whole $\Hm$-matrix can be shown to be of the order $r\,N\log N$; see~\cite{bebendorf08}.

\subsection{Matrix construction}\label{sec:aca}
There are several ways how low-rank approximations of a matrix block can be constructed. For matrices~$A$ resulting from the discretisation of non-local operators the \textit{adaptive cross approximation}~(ACA) method introduced in \cite{bebendorf00, br03, bg06} constructs the low-rank approximation
using only few of the original entries of~$A$. Non-admissible blocks cannot be approximated but are small, so they are
computed entry by entry. 
We focus on a single admissible block $A_{ts} \in \R^{t \times s}$ of $A$. With $R_{0} := A_{ts}$ we define two sequences of vectors $u_r\in\R^t$ and $v_r\in\R^s$ for $r=1,2,3,\dots$
by the following algorithm.
\begin{algorithm}
\caption{Adaptive Cross Approximation (ACA)}\label{alg:31}
\begin{algorithmic}[0]
\State Let $r = 1$; $Z = \emptyset$; $\eps_\textnormal{ACA}> 0$
\Repeat
\State find $i_{r}$ by some rule
\State $\tilde{v}_{r} := A_{i_{r}, s}$
\For {$l = 1,...,r-1$} $\tilde{v}_{r} := \tilde{v}_{r} - (u_{l})_{i_{r}}v_{l}$ \EndFor
\State $Z := Z \cup \{i_{r}\}$
\If{$\tilde{v}_{r}$ does not vanish} 
\State $j_{r} := \textnormal{argmax}_{j\in s} |(\tilde{v}_{r})_{j}|$; $v_{r} := (\tilde{v}_{r})_{j_{r}}^{-1}\tilde{v}_{r}$ 
\State $u_{r} := A_{t, j_{r}}$
\For {$l = 1,\ldots, r-1$} 
$u_{r} := u_{r} - (v_{l})_{j_{r}} u_{l}$\EndFor
\State $r := r + 1$
\EndIf
\Until $\norm{u_{r+1}}_2\norm{v_{r+1}}_2 \leq \frac{\eps_\textnormal{ACA}(1-\beta)}{1+\eps_\textnormal{ACA}} \norm{S_r}_F$ or $Z = t$
\end{algorithmic}
\end{algorithm}\\
The matrix \begin{equation}\label{eq:recRep}
  S_r := \sum_{l = 1}^{r} u_{l}v_{l}^{T}
\end{equation}
has rank at most $r$ and can be shown (under reasonable assumptions) to be
an approximation of $A_{ts}$ with remainder $R_r:=A_{ts}- S_r$ having relative accuracy
\begin{equation}\label{eq:relaccR}
\norm{R_r}_F\leq\eps_\textnormal{ACA}\norm{A_{ts}}_F,
\end{equation}
where $\norm{\cdot}_F$ denotes the Frobenius norm.
The set $Z$ collects all vanishing rows of the matrix $R_{r}$.
The vectors $u_r$ and $\tilde v_r$ are columns and rows of $R_{r-1}$, i.e.
\[
  u_r=(R_{r-1})_{t j_r},\quad \tilde v_r=(R_{r-1})_{i_r s}.
  \]
  The row indices $i_r$ have to be chosen such that the Vandermonde matrix corresponding to the system in which the
  approximation error is to be estimated is non-singular; for details see \cite{bebendorf08}.
If the $i_{r}$-th row of $R_{r-1}$ is nonzero and hence is
used as $v_{r}$ after scaling $\tilde v_r$ with $1/(\tilde v_r)_{j_r}$, it is also added to $Z$ since the $i_{r}$-th row of the following remainder $R_{r}$ will vanish. It can be shown that the numerical effort of
ACA is of the order $|Z|^{2}(|t|+|s|)$.

\section{Error Estimators and the Block-Adaptive Cross Approximation (BACA)}\label{sec:convana}
The usual way of treating discrete problems~\eqref{eq:Axb} is to construct an hierarchical matrix approximation~$\tilde A$ (e.g.\ via ACA) such that each block $\tilde A_b$, $b\in P$, satisfies (cf.~\eqref{eq:relaccR})
\begin{equation}\label{eq:Hrelappr}
  \norm{A_b-\tilde A_b}_F\leq \eps\norm{A_b}_F \quad \text{for all }b\in P.
  \end{equation}
Due to the independence of the blocks, this can be done in parallel; see \cite{BeKr04}.
While \eqref{eq:Hrelappr} guarantees local properties of the $\Hm$-matrix approximation,
the global impact of this condition is difficult to estimate.
Notice that \eqref{eq:Hrelappr} still implies the obvious global property
\[
  \norm{A-\tilde A}_F\leq \eps\norm{A}_F,
\]
however, for instance the eigenpairs of $A$ and $\tilde A$ are not as accurate in general and one has to apply suitable techniques (see \cite{MBMBMB13}) in order to be able to guarantee spectral equivalence of $A$ and $\tilde A$. 
In other words, the low-rank approximation is usually constructed
regardless of the importance of the respective block for global properties of the matrix. 

$\tilde A$ obtained from usual ACA may not be the ideal approximation to $A$ when the
norm of the solution error of the associated solution is the measure. 
In order to find an $\Hm$-matrix approximation that is better suited for this problem, we employ techniques known from adaptivity together with suitable error estimators. The strategy of error estimation is well known in the context of numerical methods for partial differential or integral equations. 
 There are several types of a posteriori error estimators. The method of this article is inspired by the $(h-h/2)$-version investigated in~\cite{fp08}.
 Adaptive methods usually focus on mesh refinement. 
Here, these strategies will be used to successively improve
block-wise low-rank approximations, whereas the underlying grid (and the hierarchical block structure~$P$) does not change.

In contrast to constructing the matrix approximation $\tilde A$ of $A$ in the usual way (via ACA) and solving a single linear system $\tilde Ax=b$, we build a sequence of $\Hm$-matrix approximations~$A_k$ of~$A$ and solve
each linear system $A_kx_k=b$ for $x_k$. At first glance this seems to be more costly than solving $\tilde Ax=b$ only once. However, the approximation of $A_{k+1}$
can be steered by the approximate solution~$x_k$, and $x_{k+1}$ can be
computed as an update of $x_k$. Furthermore, the accuracy of $x_k$ can be low for small~$k$ and we can adapt it to the estimated error.

The following method will be based on \textit{residual error estimators}. Notice that if $\op{A}:V\to V'$ denotes the
operator that is discretised by~$A$ and $u_h$ refers to the Galerkin approximation of the exact solution to $\op{A}u=f$, then
for the residual error $\norm{f-\op{A}u_h}_{V'}$ we have
\[
c_B\norm{u-u_h}_V\leq\norm{f-\op{A}u_h}_{V'}=\sup_{\fie\in V} \frac{|a(u-u_h,\fie)|}{\norm{\fie}_V}\leq c_S\norm{u-u_h}_V
\]
provided that the continuous bilinear form $a:V\times V\to\R$ corresponding to $\op{A}$, i.e.\ $|a(u,v)|\leq c_S\norm{u}_V\norm{v}_V$ for all $u,v\in V$, satisfies an \textit{inf-sup condition}
\[
  \inf_{u\in V}\sup_{v\in V} \frac{|a(u,v)|}{\norm{u}_V\norm{v}_V}\geq c_B.
  \]
 Hence, the solution error $\norm{u-u_h}_{V}$ is equivalent with the residual error $\norm{f-\op{A}u_h}_{V'}$ and in the rest of this section
 we may focus on the residual error.

In the following algorithm, $\hat A_k$ denotes a better approximation of $A$ than $A_k$,
i.e.\ we assume that the \textit{saturation condition}
\begin{equation}\label{eq:satass}
\norm{\hat A_k x_k-A x_k}\leq \csat\norm{A_k x_k-Ax_k}
\end{equation}
holds with $0<\csat<1$.
A natural choice for $\hat A_k$ is the improved approximation that results from~$A_k$ by
adding a fixed number of additional steps of ACA for each admissible block and by setting $(\hat A_k)_b=A_b$ for all other blocks $b\in P_\textnormal{non-adm}$.
We define the matrix \[
  W_k:=A_k-\hat A_k.
  \]
Then $(W_k)_b=0$ for all $b\in P_\textnormal{non-adm}$.
\begin{algorithm}
\caption{Block-adaptive ACA}\label{alg:41}
\begin{algorithmic}[0]
\State 
\begin{enumerate}
 \item Start with a coarse $\Hm$-matrix approximation $A_{0}$ of $A$ and set $k=0$.
 \item Solve the linear system $A_k x_k=b$ for $x_k$ with residual error $\delta_k$ (use $x_{k-1}$ as a starting vector of an iterative solver; $x_{-1}:=0$).
 \item Given $0<\theta<1$, find a set of marked blocks $M_k\subset P_\textnormal{adm}$ with minimal cardinality such that
 \begin{equation}\label{Dmark}
 \eta_k(M_k)\geq \theta\,\eta_k,
 \end{equation}
 where $\eta_k^2(M):=\sum_{t\times s\in M} \norm{(W_k)_{ts}(x_k)_s}_2^2$ and
 $\eta_k:=\eta_k(P_\textnormal{adm})$.
 \item Let
 \[
   A_{k+1}=\begin{cases} (\hat A_k)_b, & b\in M_k,\\
          (A_k)_b, & b\in P\setminus M_k.
   \end{cases}
   \]
  \item If $\eta_k>\eps_\textnormal{BACA}$ increment $k$ and go to 2.
\end{enumerate}
\end{algorithmic}
\end{algorithm}

Notice that Algorithm~\ref{alg:31} contains
a stopping parameter~$\eps_\textnormal{ACA}$, which describes the desired accuracy of the low-rank approximation for the respective block.
While it is difficult to choose $\eps_\textnormal{ACA}$ so that the resulting solution of the linear system satisfies a prescribed accuracy,
the parameter $\eps_\textnormal{BACA}$ in Algorithm~\ref{alg:41} gives an upper bound on the residual error $\norm{b-Ax_k}_2$ of~$x_k$ as we shall see in the next Lemma~\ref{lem:rel}.

It seems that two $\Hm$-matrices $A_k$ and $\hat A_k$ are required for the previous algorithm.
Since they are strongly related, it is actually sufficient to store only $\hat A_k$. 
Notice that the ACA steps required for the construction of $A_{k+1}$ can be adopted from $\hat A_k$, and $\hat A_{k+1}$ can be
computed from $\hat A_k$ by improving the blocks contained in $M_k$.

In addition to the locatability of the error estimator 
\begin{equation} \label{eq:errEst}
  \eta_k^2=\sum_{t\times s\in P_\textnormal{adm}} \norm{(W_k)_{ts}(x_k)_s}_2^2
\end{equation}
introduced in Algorithm~\ref{alg:41}
its reliability is a main issue for a successful adaptive procedure. In particular, the dependence
on the residual error $\delta_k:=\norm{b-A_kx_k}_2$ of the solution $x_k$ of $A_kx_k=b$ in Algorithm~\ref{alg:41} has to be investigated.

\begin{lemma}\label{lem:rel}
Let assumption \eqref{eq:satass} be valid and let $\delta_k\leq \alpha\norm{W_kx_k}_2$ with some constant $\alpha\geq 0$.
Then $\eta_k$ is reliable, i.e., it holds
\[
\norm{b-Ax_k}_2\leq \frac{1+\alpha(1+\csat)}{1-\csat}\,\norm{W_kx_k}_2\leq \sqrt{\csp L}\,\frac{1+\alpha(1+\csat)}{1-\csat}\,\eta_k.
\]
\end{lemma}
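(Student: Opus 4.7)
The plan is to prove the two inequalities separately. The first inequality, bounding the true residual by $\|W_k x_k\|_2$, is a triangle-inequality manipulation that exploits the saturation assumption~\eqref{eq:satass}; the second inequality is a Cauchy--Schwarz argument over the block partition that yields the factor $\sqrt{\csp L}$.

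For the first inequality, I would insert $\hat A_k x_k$ and use the triangle inequality twice:
\[
\norm{b - A x_k}_2 \leq \norm{b - A_k x_k}_2 + \norm{A_k x_k - \hat A_k x_k}_2 + \norm{\hat A_k x_k - A x_k}_2,
\]
which by \eqref{eq:satass} and the definition of $W_k$ is bounded by $\delta_k + \norm{W_k x_k}_2 + \csat \norm{A_k x_k - A x_k}_2$. Then I would control the last term by another triangle inequality $\norm{A_k x_k - A x_k}_2 \leq \delta_k + \norm{b - A x_k}_2$, so $\norm{b - A x_k}_2 \leq (1+\csat)\delta_k + \norm{W_k x_k}_2 + \csat \norm{b-Ax_k}_2$. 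Moving $\csat \norm{b-Ax_k}_2$ to the left and inserting the hypothesis $\delta_k \leq \alpha\norm{W_k x_k}_2$ gives exactly the claimed constant $\tfrac{1+\alpha(1+\csat)}{1-\csat}$.

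For the second inequality I would argue row-wise. For each $i\in I$ we have $(W_k x_k)_i = \sum_{t\times s\in P_\textnormal{adm},\,i\in t} ((W_k)_{ts}(x_k)_s)_i$ since $(W_k)_b = 0$ on $P_\textnormal{non-adm}$. Each index $i$ belongs to exactly one cluster per level of $T_I$, and for every such cluster the sparsity constant bounds by $\csp$ the number of partition blocks having it as a row cluster; hence the number of admissible blocks $t\times s\in P$ with $i\in t$ is at most $\csp L$. By Cauchy--Schwarz,
\[
|(W_k x_k)_i|^2 \leq \csp L \sum_{t\times s\in P_\textnormal{adm},\,i\in t} |((W_k)_{ts}(x_k)_s)_i|^2.
\]
Summing over $i\in I$ and exchanging the order of summation collects, for each admissible block $t\times s$, the full squared norm $\norm{(W_k)_{ts}(x_k)_s}_2^2$, giving $\norm{W_k x_k}_2^2 \leq \csp L\, \eta_k^2$.

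The only real subtlety is the second step: one must be careful that the blocks partition $I\times I$ (so every matrix--vector contribution is counted exactly once per index) while simultaneously invoking $\csp$ to bound the per-index overlap that arises in Cauchy--Schwarz. The first step is essentially forced once one decides to introduce $\hat A_k$ as the pivot between $A_k x_k$ and $A x_k$, so that the saturation constant $\csat$ can be applied exactly once and the absorption trick closes.
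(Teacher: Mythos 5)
Your proof of the first inequality is essentially identical to the paper's: insert $\hat A_k x_k$, apply the triangle inequality, invoke the saturation condition \eqref{eq:satass} once, bound $\norm{A_k x_k - A x_k}_2$ by $\delta_k + \norm{b - A x_k}_2$, and absorb the $\csat\norm{b-Ax_k}_2$ term. The only cosmetic difference is that you keep $\delta_k$ symbolic one step longer before substituting $\delta_k\leq\alpha\norm{W_kx_k}_2$; the algebra and constant come out the same.

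Your proof of the second inequality takes a genuinely different, if closely related, route. The paper decomposes $W_k$ into level matrices $W_k^{(\ell)}$, applies the inequality $(\sum a_i)^2\leq n\sum a_i^2$ once with $n=L$ over levels and once with $n=\csp$ over the column clusters $s$ attached to a fixed row cluster $t\in T_I^{(\ell)}$, exploiting that each $W_k^{(\ell)}$ has Cartesian block structure. You instead work entry-by-entry: you write $(W_k x_k)_i$ as a sum over the admissible blocks $t\times s$ with $i\in t$, observe that this sum has at most $\csp L$ terms (at most one cluster containing $i$ per level, at most $\csp$ blocks per such cluster), apply Cauchy--Schwarz once per entry, and sum over $i\in I$. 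This collapses the paper's two applications of Cauchy--Schwarz into one and avoids introducing the level matrices at all, at the price of making the $L\cdot\csp$ factorisation less structurally visible. Both arguments give exactly $\norm{W_kx_k}_2^2\leq\csp L\,\eta_k^2$. One small imprecision in your wording: in an unbalanced cluster tree an index $i$ belongs to \emph{at most} one cluster per level (not ``exactly one''), since leaves may occur above level $L$; this only improves the count and does not affect the bound.
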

\begin{proof}
We make use of the decomposition of $A\in\Hm(P,k)$ into a sum of
level matrices~$A^{(\ell)}$, which contain the blocks $b\in P$ of $A$ belonging to the $\ell$-th level of the block cluster tree $T_{I\times I}$
\[
A=\sum_{\ell=1}^L A^{(\ell)}.
\]
Notice that $A^{(\ell)}$ is a Cartesian product block matrix.
Due to
$(\sum_{i=1}^n a_i)^2\leq n\sum_{i=1}^n a_i^2$ for all $a_i\in\R$, $i=1,\dots,n$,
we have
\begin{align*}
\norm{W_k x_k}^2_2&\leq \left(\sum_{\ell=1}^L \norm{W_k^{(\ell)}x_k}_2\right)^2\leq L\sum_{\ell=1}^L\norm{W^{(\ell)}_k x_k}^2_2
=L\sum_{\ell=1}^L \sum_{t\in T_I^{(\ell)}} \norm{\sum_{s:t\times s\in P} (W_k)_{ts}(x_k)_s}^2_2\\
&\leq L\sum_{\ell=1}^L \sum_{t\in T_I^{(\ell)}} \left(\sum_{s:t\times s\in P}\norm{ (W_k)_{ts}(x_k)_s}_2\right)^2
\leq \csp L\sum_{\ell=1}^L \sum_{t\in T_I^{(\ell)}} \sum_{s:t\times s\in P}\norm{(W_k)_{ts}(x_k)_s}_2^2\\
&=\csp L\sum_{t\times s\in P}\norm{(W_k)_{ts}(x_k)_s}_2^2= \csp L\,\eta_k^2.
\end{align*}
The lemma follows from
\begin{align*}
\norm{b-Ax_k}_2&\leq \norm{b-A_kx_k}_2+\norm{W_k x_k}_2+\norm{\hat A_kx_k-A x_k}_2
\leq (1+\alpha)\norm{W_kx_k}_2+\csat\norm{A_kx_k-A x_k}_2\\
&\leq (1+\alpha)\norm{W_kx_k}_2+\csat(\alpha \norm{W_kx_k}_2+ \norm{b-A x_k}_2).
\end{align*}
\end{proof}

Although the efficiency of the error estimator~$\eta_k$, i.e.\ $\eta_k\lesssim\norm{b-Ax_k}_2$, can be observed in numerical experiments, its provability is still
an open question. Nevertheless, the computable expression $\norm{W_kx_k}_2$ can be used as a lower bound for $\norm{b-Ax_k}_2$.
\begin{lemma} \label{lem:eff}
  Let \eqref{eq:satass} be valid and let $\delta_k\leq \alpha\norm{W_kx_k}_2$ with $0\leq \alpha\leq 1/2$.
  Then \[\norm{b-Ax_k}_2\geq \frac{1-\alpha(1+\csat)}{1+\csat}\norm{W_kx_k}_2.\]
\end{lemma}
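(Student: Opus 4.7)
The plan is to prove the lower bound by two applications of the triangle inequality, with the saturation assumption serving as the bridge that converts a bound on $\|W_k x_k\|_2$ into a bound on the true error $\|A_k x_k - A x_k\|_2$.

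First I would rewrite $W_k x_k$ as the telescoping difference
\[
W_k x_k = (A_k - \hat A_k) x_k = (A_k - A)x_k - (\hat A_k - A) x_k,
\]
apply the triangle inequality, and invoke the saturation assumption \eqref{eq:satass} to obtain
\[
\norm{W_k x_k}_2 \leq \norm{A_k x_k - A x_k}_2 + \csat \norm{A_k x_k - A x_k}_2 = (1+\csat)\,\norm{A_k x_k - A x_k}_2.
\]
This gives the lower bound $\norm{A_k x_k - A x_k}_2 \geq \frac{1}{1+\csat}\norm{W_k x_k}_2$, which is the main leverage point of the argument.

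Next I would apply the triangle inequality in the form
\[
\norm{b - A x_k}_2 \geq \norm{A_k x_k - A x_k}_2 - \norm{b - A_k x_k}_2 = \norm{A_k x_k - A x_k}_2 - \delta_k,
\]
and use both the previous estimate and the hypothesis $\delta_k \leq \alpha\,\norm{W_k x_k}_2$ to conclude
\[
\norm{b - A x_k}_2 \geq \frac{1}{1+\csat}\norm{W_k x_k}_2 - \alpha \norm{W_k x_k}_2 = \frac{1-\alpha(1+\csat)}{1+\csat}\,\norm{W_k x_k}_2.
\]
The assumption $\alpha \leq 1/2$ together with $\csat < 1$ ensures the numerator is nonnegative, so the stated inequality is nontrivial.

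There is no real obstacle here; the proof is a short triangle-inequality computation. The only conceptual point is recognising that the saturation condition is exactly what is needed to relate the computable surrogate $W_k x_k = A_k x_k - \hat A_k x_k$ to the uncomputable quantity $A_k x_k - A x_k$, yielding efficiency in the sense $\norm{W_k x_k}_2 \lesssim \norm{b - A x_k}_2$ up to the contribution of the inner-solver residual $\delta_k$.
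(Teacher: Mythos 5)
Your proof is correct and follows essentially the same route as the paper: both use $\norm{W_k x_k}_2 \leq (1+\csat)\norm{A_k x_k - A x_k}_2$ from the saturation assumption, then the triangle inequality linking $\norm{A_k x_k - A x_k}_2$, $\norm{b-Ax_k}_2$, and $\delta_k$, and finally the hypothesis $\delta_k \leq \alpha\norm{W_k x_k}_2$. The only difference is cosmetic: you present the second step as a reverse triangle inequality and collect the lower bound directly, while the paper writes the whole chain as an upper bound on $\norm{W_k x_k}_2$ and rearranges at the end.
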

\begin{proof}
The saturation assumption \eqref{eq:satass} yields
\begin{align*}
\norm{W_kx_k}_2&\leq \norm{A_kx_k-Ax_k}_2+\norm{Ax_k-\hat A_kx_k}_2
\leq (1+\csat)\norm{Ax_k-A_k x_k}_2\\
&\leq (1+\csat)(\norm{b-Ax_k}_2+\norm{b-A_kx_k}_2)\leq (1+\csat)\norm{b-Ax_k}_2+(1+\csat)\alpha\norm{W_kx_k}_2.
\end{align*}
\end{proof}

In the following lemma the convergence of the error estimator $\eta_k$ is investigated.
Its convergence is a consequence of the \textit{D\"orfler marking strategy}~\eqref{Dmark}. To prove it, we use the
obvious property
\begin{equation}\label{eq:diffAh}
  \lim_{k\to\infty} \norm{A_{k+1}-A_k}=0=\lim_{k\to\infty} \norm{\hat A_{k+1}-\hat A_k}.
\end{equation}
\begin{lemma}\label{lem:opred}
Assume that the sequence $\{\norm{A_k^{-1}}_2\}_{k\in\N_0}$ is bounded and that $\delta_k \rightarrow 0$ for $k \rightarrow \infty$. Then it holds that
\begin{equation}\label{eq:errred}
  \eta^2_{k+1}\leq q\,\eta^2_k+z_k,
\end{equation}
where $z_k$ converges to zero and $q:=1-\frac{1}{2}\theta^2<1$. Furthermore, $\lim_{k\to\infty}\eta_k=0$.
\end{lemma}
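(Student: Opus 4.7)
The plan is to unroll one BACA step blockwise and extract a contraction from the Dörfler marking, treating all extra terms as error contributions that vanish in the limit.

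\textbf{Step 1: Block decomposition of $W_{k+1}$.} First I split $P_\textnormal{adm}$ into the marked set $M_k$ and its complement. From the definition of $A_{k+1}$ in Algorithm~\ref{alg:41}, for $b=t\times s\in P_\textnormal{adm}\setminus M_k$ one has $(A_{k+1})_b=(A_k)_b$ while $(\hat A_{k+1})_b$ will generally differ from $(\hat A_k)_b$, giving
\[
(W_{k+1})_b = (W_k)_b + (\hat A_k - \hat A_{k+1})_b,
\]
whereas for $b\in M_k$, $(A_{k+1})_b=(\hat A_k)_b$ and $(W_{k+1})_b=(\hat A_k-\hat A_{k+1})_b$. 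Plugging this into \eqref{eq:errEst} expresses $\eta_{k+1}^2$ as a sum of these two types of contributions, evaluated at $x_{k+1}$ rather than $x_k$.

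\textbf{Step 2: Young inequality and Dörfler marking.} I apply the elementary estimate $(a+b)^2\le(1+\gamma)a^2+(1+1/\gamma)b^2$ twice: once on the unmarked blocks to separate $(W_k)_b$ from $(\hat A_k-\hat A_{k+1})_b$ (parameter $\gamma_1$), and once to replace $(W_k)_b(x_{k+1})_s$ by $(W_k)_b(x_k)_s + (W_k)_b(x_{k+1}-x_k)_s$ (parameter $\gamma_2$). This yields
\[
\eta_{k+1}^2 \le (1+\gamma_1)(1+\gamma_2)\,\eta_k^2(P_\textnormal{adm}\setminus M_k) + z_k,
\]
where $z_k$ collects all remaining terms, each of the form $\sum_b\|(\hat A_k-\hat A_{k+1})_b(x_{k+1})_s\|_2^2$ or $\sum_b\|(W_k)_b(x_{k+1}-x_k)_s\|_2^2$. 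The Dörfler criterion \eqref{Dmark} gives $\eta_k^2(P_\textnormal{adm}\setminus M_k)\le(1-\theta^2)\eta_k^2$, so choosing $\gamma_1,\gamma_2>0$ small enough that $(1+\gamma_1)(1+\gamma_2)(1-\theta^2)\le 1-\tfrac{1}{2}\theta^2=:q$ (which is possible since $(1-\tfrac{1}{2}\theta^2)/(1-\theta^2)>1$) delivers the contraction factor.

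\textbf{Step 3: $z_k\to 0$.} A bound of the form $\sum_{b=t\times s\in P_\textnormal{adm}}\|M_b y_s\|_2^2\le C\,\|M\|^2\|y\|_2^2$ (using the sparsity constant $\csp$ and a level-wise argument analogous to the one in Lemma~\ref{lem:rel}) reduces the task to showing $\|\hat A_{k+1}-\hat A_k\|\to 0$, $\|x_k\|$ bounded, and $\|x_{k+1}-x_k\|\to 0$. The first is exactly \eqref{eq:diffAh}, the second follows from $x_k=A_k^{-1}(b-r_k)$ with $\|r_k\|\le\delta_k\to 0$ and the boundedness of $\{\|A_k^{-1}\|\}$. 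For the third I use $A_{k+1}^{-1}-A_k^{-1}=A_{k+1}^{-1}(A_k-A_{k+1})A_k^{-1}$, which tends to zero in norm by \eqref{eq:diffAh} combined with the uniform bound on $\|A_k^{-1}\|$, together with $\delta_k\to 0$. This is the main technical step and the only place where both hypotheses of the lemma are used in an essential way.

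\textbf{Step 4: Conclusion.} Once \eqref{eq:errred} is established, unrolling the recursion gives
\[
\eta_k^2 \le q^k \eta_0^2 + \sum_{j=0}^{k-1} q^{k-1-j} z_j,
\]
and since $q<1$ and $z_j\to 0$, a standard Cesàro argument (split the sum at a cut-off after which $z_j$ is small) shows the right-hand side tends to zero, proving $\eta_k\to 0$. The main obstacle is really Step 3: the error contributions $z_k$ mix three effects---inexact solution ($\delta_k$), approximation improvement ($\hat A_{k+1}-\hat A_k$), and iterate drift ($x_{k+1}-x_k$)---and care is needed in tracking how the hypotheses on $\|A_k^{-1}\|$ and $\delta_k$ propagate through the inverse difference to control $x_{k+1}-x_k$.
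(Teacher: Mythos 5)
Your proposal is correct and follows essentially the same route as the paper: split $\eta_{k+1}^2$ over $M_k$ and its complement, apply Young's inequality together with the D\"orfler criterion~\eqref{Dmark} to extract the contraction factor $q=1-\tfrac12\theta^2$, bound the remainder $z_k$ using the $\csp L$ argument from Lemma~\ref{lem:rel} plus the boundedness of $\norm{A_k^{-1}}_2$, $\delta_k\to0$ and~\eqref{eq:diffAh}, and conclude $\eta_k\to0$ by iterating~\eqref{eq:errred}. The only cosmetic difference is that you allow $\hat A_{k+1}$ to differ from $\hat A_k$ on unmarked blocks (which costs you a second Young parameter), whereas the paper's proof tacitly uses the convention, stated just before Lemma~\ref{lem:rel}, that $\hat A_{k+1}$ is obtained from $\hat A_k$ by improving only the blocks in $M_k$, so that $(W_{k+1})_b=(W_k)_b$ for $b\notin M_k$ and a single Young parameter suffices.
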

\begin{proof} 
Using the definition of $A_{k+1}$, $\eta_{k+1}^2$ reads
\[
\sum_{t\times s\in P} \norm{(W_{k+1})_{ts} (x_{k+1})_s}_2^2
=\sum_{t\times s\in M_k} \norm{(\hat A_k-\hat A_{k+1})_{ts} (x_{k+1})_s}_2^2
+\sum_{t\times s\not\in M_k} \norm{(A_k-\hat A_k)_{ts} (x_{k+1})_s}_2^2.
\]
Due to Young's inequality and \eqref{Dmark} we have for the second term
\begin{align*}
  \sum_{t\times s\not\in M_k} &\norm{(W_k)_{ts} (x_{k+1})_s}_2^2
  \leq (1+\epsilon)\,\eta_k^2(P\setminus M_k)+
  (1+1/\epsilon)\sum_{t\times s\not\in M_k} \norm{(W_k)_{ts} (x_{k+1}-x_k)_s}_2^2\\
  &= (1+\epsilon)[\eta_k^2-\eta_k^2(M_k)]+(1+1/\epsilon)\sum_{t\times s\not\in M_k} \norm{(W_k)_{ts} (x_{k+1}-x_k)_s}_2^2\\
  &\leq (1+\epsilon)(1-\theta^2)\,\eta_k^2+(1+1/\epsilon)\sum_{t\times s\not\in M_k} \norm{(W_k)_{ts} (x_{k+1}-x_k)_s}_2^2
\end{align*}
for all $\epsilon>0$.
The choice $\epsilon:=\frac{1}{2}\theta^2/(1-\theta^2)$ and
\begin{align*}
z_k&:=\sum_{t\times s\in M_k} \norm{(\hat A_k-\hat A_{k+1})_{ts} (x_{k+1})_s}_2^2+ (1+1/\epsilon)\sum_{t\times s\not\in M_k} \norm{(W_k)_{ts} (x_{k+1}-x_k)_s}_2^2\\
&\leq \csp L \max_{t\times s\in P}\norm{(\hat A_k-\hat A_{k+1})_{ts}}_2^2\,\norm{x_{k+1}}_2^2 \\
&\quad +(1+1/\epsilon)\csp L\max_{t\times s\in P}\norm{(W_k)_{ts}}_2^2\,\norm{A_{k+1}^{-1}}^2_2\,\norm{A_{k+1}x_{k+1}-A_{k+1}x_k}_2^2\\
&\leq \csp L \Big[\norm{\hat A_k - \hat A_{k+1}}_2^2 \, \norm{x_{k+1}}_2^2 \\
&\quad +(1+1/\epsilon) \norm{W_k}_2^2\,\norm{A_{k+1}^{-1}}^2_2 \, \Big(\norm{A_{k+1}x_{k+1}-b}_2^2 + \norm{(A_{k+1}-A_k)x_k}_2^2 + \norm{A_kx_k-b}_2^2 \Big)\Big]\\
&\leq \csp L \Big[\norm{\hat A_k - \hat A_{k+1}}_2^2 \, \norm{x_{k+1}}_2^2 \\
&\quad +(1+1/\epsilon) \norm{W_k}_2^2\,\norm{A_{k+1}^{-1}}^2_2 \,\Big(\delta_{k+1}^2 + \norm{A_{k+1}-A_k}_2^2\, \norm{x_k}_2^2 + \delta_k^2 \Big)\Big]
\end{align*}
leads to the first part of the assertion due to the boundedness of $\norm{A_k^{-1}}_2$, the convergence of $\delta_k$ to~$0$ and~\eqref{eq:diffAh}.

For the proof of the second part we pursue the ideas of the estimator reduction principle, which was introduced in \cite{afl12}.
Let $z > 0$ be a number with $z_{k} \leq 
z$ for all $k$.  With the estimator reduction~\eqref{eq:errred} it follows that
\begin{equation*}
 \begin{split}
  \eta^2_{k+1} &\leq q\, \eta^2_{k} + z_k 
   \leq q^2\, \eta_{k-1}^2 + q\, z_{k-1}+z_k\leq\ldots
  \leq q^{k+1} \eta_{0}^2 + \sum_{i = 0}^{k} q^{k-i} z_i \\
   &\leq q^{k+1} \eta_{0}^2 + z \sum_{l = 0}^{k} q^l
 \leq \eta_{0}^2 + \frac{z}{1 - q}.
 \end{split}
\end{equation*}
Hence, the sequence $\{\eta_{k}\}_{k \in \mathbb{N}_{0}}$ is bounded and we define $M:=\limsup_{k \rightarrow \infty} \eta_{k}^2$.
Using the estimator reduction~\eqref{eq:errred} once more, leads to
\begin{equation*}
 M = \limsup_{k \rightarrow \infty} \eta_{k+1}^2 \leq q \limsup_{k \rightarrow \infty} \eta^2_{k} + \limsup_{k \rightarrow \infty} 
 z_{k} = qM.
\end{equation*}
Hence, $M=0$ and we obtain
\begin{equation*}
 0 \leq \liminf_{k \rightarrow \infty} \eta_{k} \leq \limsup_{k \rightarrow \infty} \eta_{k} = 0
\end{equation*}
and finally $\lim_{k \rightarrow \infty} \eta_{k} = 0$.
\end{proof}

\begin{theorem}
  The residuals $r_k:=b-Ax_k$ of the sequence $\{x_k\}_{k\in\N}$ constructed by Algorithm~\ref{alg:41} converge to zero.
\end{theorem}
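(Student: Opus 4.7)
The plan is to combine the two main results already established in the section, namely the reliability estimate and the convergence of the estimator to zero. More precisely, the reliability Lemma gives
\[
\|r_k\|_2 = \|b-Ax_k\|_2 \leq \sqrt{\csp L}\,\frac{1+\alpha(1+\csat)}{1-\csat}\,\eta_k,
\]
so once we have $\eta_k\to 0$ the assertion follows. Hence the proof reduces to verifying that the hypotheses of the estimator convergence Lemma and of the reliability Lemma are in force simultaneously.

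First I would fix the iterative solver tolerance so that the residual of the inner solve satisfies $\delta_k\leq \alpha\,\norm{W_kx_k}_2$ for some $\alpha\in[0,1/2]$, and additionally $\delta_k\to 0$. This dual requirement is consistent: if $\norm{W_k x_k}_2$ is large we only demand a coarse inner solve, and the absolute tolerance can be independently driven to zero as $k$ grows (e.g.\ by imposing $\delta_k\leq\min\{\alpha\norm{W_kx_k}_2, 2^{-k}\}$). With the boundedness of $\{\|A_k^{-1}\|_2\}$ taken as a standing hypothesis inherited from the previous lemma, the estimator convergence Lemma then applies and yields $\eta_k\to 0$.

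Next I would invoke the reliability estimate. Under the same $\delta_k\leq \alpha\norm{W_kx_k}_2$ assumption (which is precisely the one just enforced), the inequality displayed above is valid for every $k$ with a constant independent of $k$. Passing to the limit $k\to\infty$ and using $\eta_k\to 0$ gives $\|r_k\|_2\to 0$, which is the claim of the theorem.

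The main obstacle is the coupling between the two hypotheses $\delta_k\leq\alpha\|W_kx_k\|_2$ and $\delta_k\to 0$: if $\|W_kx_k\|_2$ happened to vanish at some finite step before the outer loop has terminated, the first condition would force $\delta_k=0$ and the second would be trivial, but in general one has to check that the iterative solver can meet both demands throughout the process. This is essentially the only delicate point; once the admissibility of this tolerance choice is granted, the theorem follows as a direct concatenation of Lemma on estimator convergence with the reliability Lemma.
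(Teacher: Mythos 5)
Your proof is correct and follows essentially the same route as the paper: combine the reliability bound of Lemma~\ref{lem:rel} with the estimator convergence $\eta_k\to 0$ from Lemma~\ref{lem:opred}. The paper's proof is exactly this one-line concatenation, while you additionally spell out that the hypotheses of both lemmas ($\delta_k\le\alpha\norm{W_kx_k}_2$, $\delta_k\to 0$, boundedness of $\norm{A_k^{-1}}_2$) must be arranged to hold simultaneously — a reasonable clarification, since the theorem statement leaves these as standing assumptions from the preceding lemmas. One small inaccuracy: you restrict to $\alpha\in[0,1/2]$, but that restriction is only needed for the efficiency Lemma~\ref{lem:eff}; the reliability estimate used here holds for any $\alpha\ge 0$, and indeed the numerical experiments in the paper use $\alpha=100$.
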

\begin{proof}
With Lemma~\ref{lem:rel} and Lemma~\ref{lem:opred} we obtain that
\[
  \norm{b-Ax_k}_2\leq \sqrt{\csp  L}\,\frac{1+\alpha(1+\csat)}{1-\csat}\,\eta_k\to0.
  \]
\end{proof}

  \section{Numerical Experiments}\label{sec:5}
  The numerical experiments are subdivided into three parts. At first we investigate the storage requirements of BACA. After that the quality of the proposed error
  estimator is discussed. Finally, the third part treats the acceleration of the matrix approximation and solution of the boundary value problem in comparison with ACA.
  In order to be able to compare these results with the results obtained
  from ACA, we prescribe the same error of $u_h$. Therefore, \[
    e_h := \frac{\norm{u - u_h}_{L^2}}{\norm{u}_{L^2}}
    \] describes the relative error of the solution $u_h$.
  
  The approximation steps are executed without any parallelisation in both cases. We use 
  the conjugate gradient method  as a solver for the arising linear systems of equations without preconditioning.
  In the case of BACA, the accuracy of the iterative solver is adapted to the size of~$\norm{W_kx_k}_2$; see Lemma~\ref{lem:rel}.
  For all the experiments that are based on ACA the accuracy of CG is set to~$10^{-8}$. 
  
  As numerical examples we consider a family of boundary value problems, in which the singularity of the right-hand side is moved towards the boundary of the computational domain~$\Omega$, i.e.
  \begin{subequations} \label{eq:numEx}
  \begin{alignat}{2}
    -\Delta u &= 0 &  &\text{ in } \Omega:=B_1(0), \\
   u(x) &= S(x - p_{i}) &\quad &\text{ on } \partial \Omega, \ i = 1,2,3,4,
   \end{alignat}
\end{subequations}
  for $p_{i} = (x_{i}, 0, 0)^{T}$ with $x_1 = 10.0$, $x_2 = 1.5$, $x_3 = 1.1$, $x_4 = 1.05$. In these tests we use the following
  parameters:
  the minimal block size $b_{\min} = 15$, the blockwise accuracy $\varepsilon_{\text{ACA}} = 10^{-6}$ of ACA, and the admissibility parameter
  $\beta = 0.8$.

  \subsection{Storage Reduction} \label{sec:51}
  We start with a uniform decomposition of the unit ball into 642 points and 1280 triangles for the boundary value problems~\eqref{eq:numEx}.
  Note that ACA does not take into account the right-hand side. Hence the approximation does not depend on the choice of $p_i$. So, we obtain the left picture of 
  Fig.~\ref{fig:singLay3} after the application of ACA to the discrete integral operator in each of the cases $i = 1,\dots,4$.

  The proposed algorithm BACA provides the results shown in Tab.~\ref{tab:NumRes2}, in which the parameter $\theta = 0.9$ was used.
  Here, the ranks of $\hat A_k$ are ahead of $A_k$ by two ACA steps. The parameter $\alpha$ from Lemma~\ref{lem:rel} is set to $100$.
   Compared with the approximation obtained from ACA, we obtain lower storage requirements for the matrix approximation~$\hat A_k$.
   
   \begin{table}[htb] \centering
   \begin{tabular}{r | c c c |c c c  | c  c  c}
      & 	 \multicolumn{6}{c}{BACA}	 & 	\multicolumn{3}{|c}{ACA}		 \\
      $i$   & blockwise & $\varepsilon_{\text{BACA}}$  &  $\norm{b - Ax_k}_2$&  $e_h$& storage & compr. & $e_h$ & storage & compr. \\
 & rank $A_0$&  & & & (MB) & (\%) & & (MB) & (\%) \\ \hline 
    1 & 6 & 1e-08 & 1.89e-08 & 0.002 & 3.43 & 54.9 & 0.002 & 3.85 & 61.5 \\[2pt]
    2 & 4 & 5e-06 & 6.08e-06 & 0.033 & 2.86 & 45.7 & 0.033 & 3.85 & 61.5 \\[2pt]
    3 & 3 & 1e-04 & 1.48e-04 & 0.264 & 2.40 & 38.4 & 0.264 & 3.85 & 61.5 \\[2pt]
    4 & 2 & 5e-04 & 6.40e-04 & 0.951 & 2.10 & 33.5 & 0.951 & 3.85 & 61.5 \\
   \end{tabular}\vspace*{0.2cm}\\
   \caption{Numerical results of BACA for four positions of the singularity.} \label{tab:NumRes2}
  \end{table}

  The storage reduction and the improved compression rate observed in Tab.~\ref{tab:NumRes2} are also visible from Fig.~\ref{fig:singLay3},
  where the respective approximation $\hat A_k$ of $A$ with its blockwise ranks is shown (green blocks). Therein, red blocks are  
  constructed entry-wise without approximation.
  \begin{figure}[htb] \centering
    \scalebox{0.35}{\includegraphics{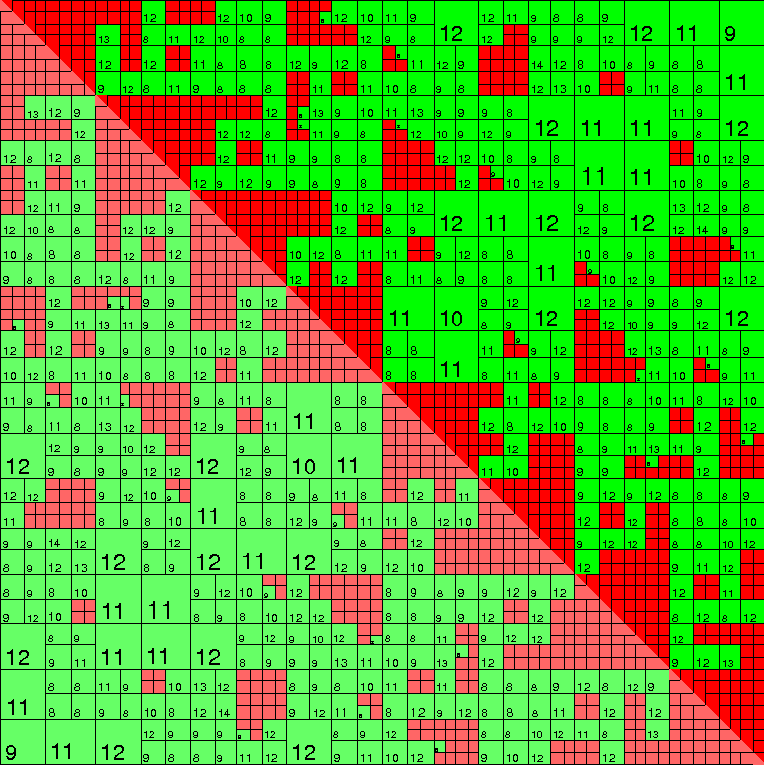}} \hspace*{1cm}
    \scalebox{0.35}{\includegraphics{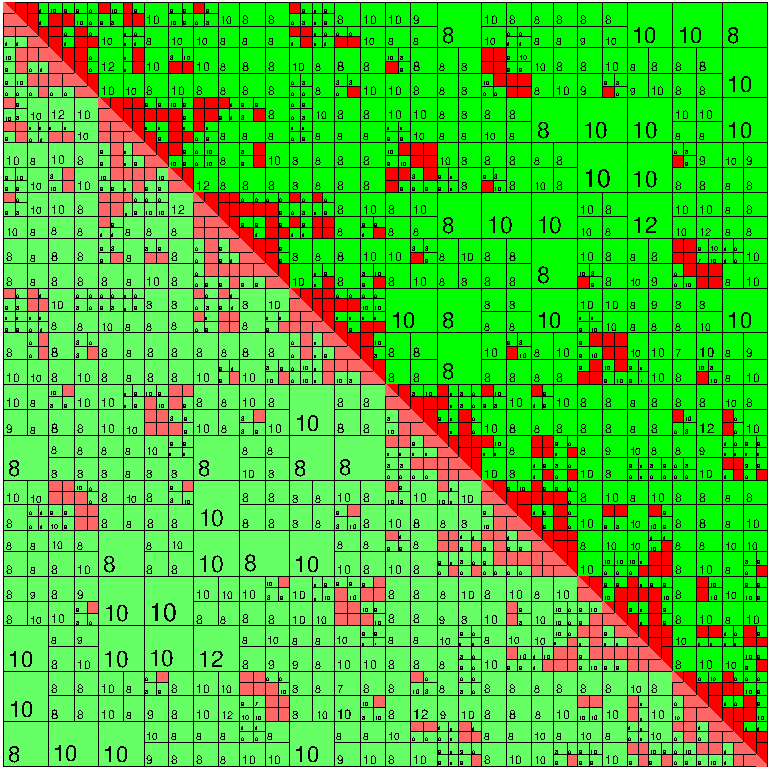}}\\
    \caption{Approximation of the blocks via ACA (left) and via BACA in the case $p_1$ (right).} \label{fig:singLay3}
  \end{figure}

  Example~\eqref{eq:numEx} has been chosen in order to investigate different right-hand sides which lead to structural differences in the respective solution.
  Notice that the influence of the singularity on the solution is stronger the smaller the distance of $p_i$ to the boundary of $\Omega$ becomes.
  Hence, for $p_i$ close to the boundary certain parts of the discrete integral operator are more important for the accuracy of the solution than others.
  Even for a relatively large distance, i.e.\ $p_1 = (10,0,0)^T$, smaller ranks and hence a better compression rate than ACA can be observed;
  see Fig.~\ref{fig:singLay3}. This effect is even stronger if $p_i$ approaches the boundary.
  The average ranks are shown in Tab.~\ref{tab:avRanks}. 
  \begin{table}[htb] \centering
   \begin{tabular}{c | c | c | c | c | c}
 	 matrix	 & 	 ACA		&	\multicolumn{4}{c}{BACA} \\
	&			&		$p_1$			&	$p_2$		&	$p_3$	&	$p_4$	\\ \hline 
    $A_k$	&	12.54		&		7.44			&	5.00		&	3.71	&	3.08	\\[2pt]
    $\hat{A}_k$	&	---		&		8.52			&	6.62		&	5.38	&	5.01
   \end{tabular}\vspace*{0.2cm}\\
   \caption{Average ranks.} \label{tab:avRanks}
  \end{table}
  So Tab.~\ref{tab:NumRes2} and Fig.~\ref{fig:singLay3} indicate that the proposed
  error estimator detects those matrix blocks
  which are important for the respective problem; see Fig.~\ref{fig:detect}.
  \begin{figure}[htb] \centering
    \scalebox{0.35}{\includegraphics{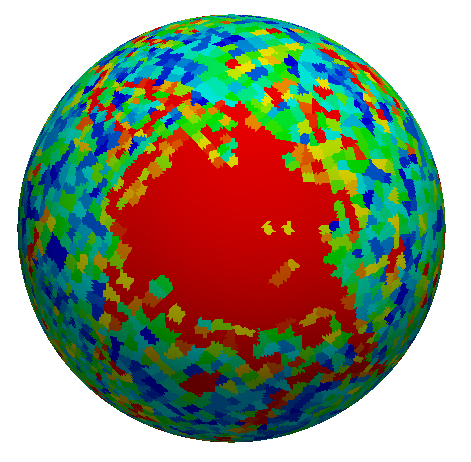}}
    \caption{Regions corresponding to large ranks (red).} \label{fig:detect}
  \end{figure}
  Therefore, BACA is able to construct an $\Hm$-matrix approximation
  that is particularly suited to solve a linear system for a specific right-hand side. 
  
  The improved storage requirements can be observed also for finer triangulations of the boundary. 
  Tab.~\ref{tab:NumN} shows the results for the boundary value problem \eqref{eq:numEx} in the case $i=3$ for several numbers
  of degrees of freedom~$N$. In addition to the compression rate and the storage requirements, the number of computed entries for the construction of~$\hat A_k$ are
  presented. Since the solution $u_h$ is expected to be more accurate the larger~$N$ is, the accuracy~$\eps_{\text{BACA}}$
  of the error estimator has to be adapted to $N$. All other parameters remain unchanged. In comparison, the results of ACA applied to the boundary value 
  problem \eqref{eq:numEx} in the case $i=3$ can be seen in Tab.~\ref{tab:NumN_ACA}.
 
     \begin{table}[htb] \centering
   \scalebox{1.0}{
  \begin{tabular}{ r |  c c | c  c r r}
    $N$ & $\varepsilon_{\text{BACA}}$  &  $\norm{b - Ax_k}_2$&  $e_h$ & number of computed & storage & compr.  \\
	& & & &entries& (MB) & (\%) \\ \hline
  1\,280	&1e-04	&4.14e-04	&0.264	&3.22e05	&2.48		&39.7		\\ [2pt]
  7\,168	&1e-05	&2.31e-05	&0.077	&3.32e06	&25.47		&13.0		\\ [2pt]
  28\,672	&1e-06	&2.56e-06	&0.023 	&2.10e07	&160.98		&5.1		\\ [2pt]
 114\,688 	&1e-06	&2.13e-06	&0.007 	&8.76e07	&837.92		&1.7		\\ 
  \end{tabular}}
  \caption{Numerical results of BACA for $p_3$ and several $N$.} \label{tab:NumN}
  \end{table}
  
    \begin{table}[htb] \centering
   \scalebox{1.0}{
  \begin{tabular}{ r |c c r r}
    $N$ &$e_h$ & number of computed &  storage & compr. \\
	& & entries& (MB) & (\%) \\ \hline
  1\,280	&0.264	&5.01e05	&3.85		&61.5		\\ [2pt]
  7\,168	&0.077	&4.91e06	&37.53		&19.8		\\ [2pt]
  28\,672	&0.023 	&2.66e07	&203.70 	&6.5		\\ [2pt]
 114\,688 	&0.007	&1.35e08	&1034.56 	&2.1		\\ 
  \end{tabular}}
  \caption{Numerical results of ACA for $p_3$ and several $N$.} \label{tab:NumN_ACA}
  \end{table}
  
  Hence, BACA requires significantly less original matrix entries than the usual construction via ACA of the $\Hm$-matrix approximation to obtain approximately the same
  relative error of~$u_h$. 

  \subsection{Quality of the error estimator}
  In the following tests we validate the  error estimator $\eta_k$ introduced in~\eqref{eq:errEst}. At first the 
  reliability statement of Lemma~\ref{lem:rel} together with the lower bound $\norm{W_kx_k}_2$ of Lemma~\ref{lem:eff} 
  is investigated.
  Again, the boundary element method combined with BACA is tested with the problem described in \eqref{eq:numEx} and a triangulation with 14338 points
  and 28672 triangles. The accuracy $\eps_{\text{BACA}}$ of the error estimator is set to~$10^{-7}$.
  We start BACA with a coarse approximation $A_0$ which has been generated by applying four ACA steps to each block.
  Tab.~\ref{tab:errEst} and in Fig.~\ref{fig:errEst} contain the results for $\alpha = 1/2$ and three refinement parameters~$\theta$.
  The ranks of $\hat A_k$ are ahead of $A_k$ by three ACA steps.
  
  \begin{figure}[htb] \centering \scalebox{0.8}{
  \vbox{
  \begin{tikzpicture}
  \begin{semilogyaxis}[
       xmin = 1, xmax = 8, ymax = 1e-04, xlabel = $k$, 
  legend style = {at={(1,1)}, anchor = north east},
  title={$\theta = 0.9$}]
  \addplot[blue, very thick] coordinates{
  (1,5*1.36e-05)
  (2,5*6.07e-06)
  (3,5*2.71e-06)
  (4,5*1.20e-06)
  (5,5*5.60e-07)
  (6,5*2.56e-07)
  (7,5*1.19e-07)
  (8,5*5.96e-08)
  };
  \addlegendentry{$5\eta_k$}
  \addplot[blue, dotted, very thick] coordinates{
  (0,1.73e-05)
  (1,1.07e-05)
  (2,5.68e-06)
  (3,3.16e-06)
  (4,1.59e-06)
  (5,6.68e-07)
  (6,3.46e-07)
  (7,1.91e-07)
  (8,1.04e-07)
  };
  \addlegendentry{$\norm{b-Ax_k}_2$}
    \addplot[blue, dashed, very thick] coordinates{
  (1,0.5*1.98e-05)
  (2,0.5*8.30e-06)
  (3,0.5*3.81e-06)
  (4,0.5*1.59e-06)
  (5,0.5*7.12e-07)
  (6,0.5*3.01e-07)
  (7,0.5*1.30e-07)
  (8,0.5*6.33e-08)
  };
  \addlegendentry{$\frac{1}{2}\norm{W_k x_k}_2$}
  \end{semilogyaxis}
  \end{tikzpicture}
  \begin{tikzpicture}
  \begin{semilogyaxis}[
       xmin = 1, xmax = 17, ymax = 1e-04, xlabel = $k$,
  legend style = {at={(1,1)}, anchor = north east}, title={$\theta = 0.7$}]
    \addplot[red, very thick] coordinates{
  (1,5*1.36e-05)
  (2,5*9.77e-06)
  (3,5*6.96e-06)
  (4,5*4.98e-06)
  (5,5*3.57e-06)
  (6,5*2.58e-06)
  (7,5*1.85e-06)
  (8,5*1.32e-06)
  (9,5*9.52e-07)
  (10,5*6.96e-07)
  (11,5*5.04e-07)
  (12,5*3.64e-07)
  (13,5*2.65e-07)
  (14,5*1.92e-07)
  (15,5*1.40e-07)
  (16,5*1.04e-07)
  (17,5*7.79e-08)
  };
  \addlegendentry{$5 \eta_k$}
  \addplot[red, dotted, very thick] coordinates{
  (0,1.73e-05)
  (1,1.41e-05)
  (2,1.19e-05)
  (3,9.45e-06)
  (4,7.05e-06)
  (5,5.55e-06)
  (6,4.22e-06)
  (7,3.39e-06)
  (8,2.42e-06)
  (9,1.85e-06)
  (10,1.55e-06)
  (11,1.03e-06)
  (12,6.84e-07)
  (13,5.19e-07)
  (14,4.11e-07)
  (15,3.19e-07)
  (16,2.38e-07)
  (17,1.95e-07)
  };
  \addlegendentry{$\norm{b-Ax_k}_2$}
    \addplot[red, dashed, very thick] coordinates{
  (1,0.5*1.98e-05)
  (2,0.5*1.29e-05)
  (3,0.5*9.33e-06)
  (4,0.5*7.06e-06)
  (5,0.5*4.88e-06)
  (6,0.5*3.58e-06)
  (7,0.5*2.46e-06)
  (8,0.5*1.76e-06)
  (9,0.5*1.26e-06)
  (10,0.5*9.00e-07)
  (11,0.5*6.30e-07)
  (12,0.5*4.44e-07)
  (13,0.5*3.13e-07)
  (14,0.5*2.14e-07)
  (15,0.5*1.52e-07)
  (16,0.5*1.13e-07)
  (17,0.5*8.40e-08)
  };
  \addlegendentry{$\frac{1}{2}\norm{W_k x_k}_2$}
  \end{semilogyaxis}
  \end{tikzpicture} 
  \begin{tikzpicture}
  \begin{semilogyaxis}[
       xmin = 1, xmax = 24, ymax = 1e-04, xlabel = $k$,
  legend style = {at={(1,1)}, anchor = north east},, title={$\theta = 0.6$}]
  \addplot[black!50!green, very thick] coordinates{
  (1,5*1.36e-05)
  (2,5*1.08e-05)
  (3,5*8.49e-06)
  (4,5*6.78e-06)
  (5,5*5.41e-06)
  (6,5*4.33e-06)
  (7,5*3.47e-06)
  (8,5*2.80e-06)
  (9,5*2.24e-06)
  (10,5*1.79e-06)
  (11,5*1.44e-06)
  (12,5*1.15e-06)
  (13,5*9.31e-07)
  (14,5*7.53e-07)
  (15,5*6.07e-07)
  (16,5*4.90e-07)
  (17,5*3.94e-07)
  (18,5*3.19e-07)
  (19,5*2.57e-07)
  (20,5*2.08e-07)
  (21,5*1.69e-07)
  (22,5*1.37e-07)
  (23,5*1.12e-07)
  (24,5*9.31e-08)
  };
  \addlegendentry{$5\eta_k$}
  \addplot[black!50!green, dotted, very thick] coordinates{
  (0,1.73e-05)
  (1,1.52e-05)
  (2,1.31e-05)
  (3,1.18e-05)
  (4,9.78e-06)
  (5,8.24e-06)
  (6,6.87e-06)
  (7,5.93e-06)
  (8,4.90e-06)
  (9,4.15e-06)
  (10,3.53e-06)
  (11,3.11e-06)
  (12,2.37e-06)
  (13,1.99e-06)
  (14,1.70e-06)
  (15,1.34e-06)
  (16,1.10e-06)
  (17,8.41e-07)
  (18,6.70e-07)
  (19,5.68e-07)
  (20,4.73e-07)
  (21,4.06e-07)
  (22,3.38e-07)
  (23,2.89e-07)
  (24,2.28e-07)
  };
  \addlegendentry{$\norm{b-Ax_k}_2$}
    \addplot[black!50!green, dashed, very thick] coordinates{
  (1,0.5*1.98e-05)
  (2,0.5*1.40e-05)
  (3,0.5*1.16e-05)
  (4,0.5*8.92e-06)
  (5,0.5*7.50e-06)
  (6,0.5*6.30e-06)
  (7,0.5*4.73e-06)
  (8,0.5*3.92e-06)
  (9,0.5*3.04e-06)
  (10,0.5*2.36e-06)
  (11,0.5*1.92e-06)
  (12,0.5*1.50e-06)
  (13,0.5*1.24e-06)
  (14,0.5*9.86e-07)
  (15,0.5*7.72e-07)
  (16,0.5*6.13e-07)
  (17,0.5*4.88e-07)
  (18,0.5*3.82e-07)
  (19,0.5*3.03e-07)
  (20,0.5*2.39e-07)
  (21,0.5*1.82e-07)
  (22,0.5*1.48e-07)
  (23,0.5*1.23e-07)
  (24,0.5*1.01e-07)
  };
  \addlegendentry{$\frac{1}{2}\norm{W_k x_k}_2$}
  \end{semilogyaxis}
  \end{tikzpicture}
  }
  }
  \caption{The error estimator, the residual, and the lower bound $\norm{W_kx_k}_2$ for several parameters $\theta$.} \label{fig:errEst}
  \end{figure}
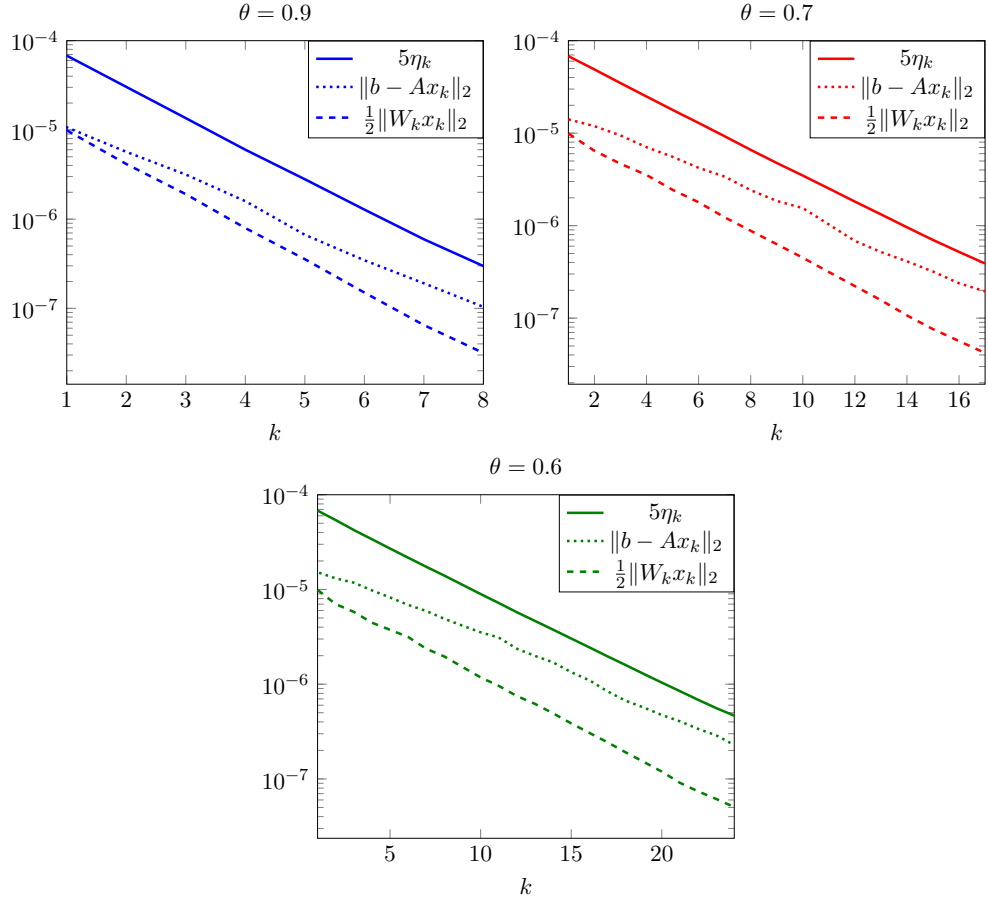
  The proposed error estimator~$\eta_k$ estimates the residual error $\norm{b-Ax_k}_2$ in an appropriate way.
  The expression $\frac{1}{2}\norm{W_kx_k}_2$ can serve as a lower bound for the residual error.
    Tab.~\ref{tab:errEst} shows that the convergence with respect to~$k$ is the faster the refinement parameter~$\theta$ is closer to~$1$.
  A larger parameter~$\theta$ in \eqref{Dmark} leads to a larger set~$M_k$ of marked blocks, which is likely to produce
  redundant information in~$A_{k+1}$. Conversely, small refinement parameters~$\theta$ usually lead to matrix approximations with lower storage requirements.
  
   \begin{table}[htb] \centering
  \begin{tabular}{  r | c c  c | c c c  }
  	& \multicolumn{3}{c|}{$\eta_{k}$}					&\multicolumn{3}{c}{$\norm{b-Ax_k}_2$}		\\	
  $k$ 	& $\theta = 0.6$	& $\theta = 0.7$	&$\theta = 0.9	$	& $\theta = 0.6$	& $\theta = 0.7$	&$\theta = 0.9$		\\ \hline 
  1	&1.36e-05		&1.36e-05		&1.36e-05		&1.52e-05		&1.41e-05		&1.07e-05			\\ 
  2	&1.08e-05		&9.77e-06		&6.07e-06		&1.31e-05		&1.19e-05		&5.68e-06			\\ 
  3	&8.49e-06		&6.96e-06		&2.71e-06		&1.18e-05		&9.45e-06		&3.16e-06			\\ 
  4	&6.78e-06		&4.98e-06		&1.20e-06		&9.78e-06		&7.05e-06		&1.59e-06			\\ 
  5	&5.41e-06		&3.57e-06		&5.60e-07		&8.24e-06		&5.55e-06		&6.68e-07			\\ 
  6	&4.33e-06		&2.58e-06		&2.56e-07		&6.87e-06		&4.22e-06		&3.46e-07			\\ 
  7	&3.47e-06		&1.85e-06		&1.19e-07		&5.93e-06		&3.39e-06		&1.91e-07			\\ 
  8	&2.80e-06		&1.32e-06		&5.96e-08		&4.90e-06		&2.42e-06		&1.04e-07			\\ 
  9	&2.24e-06		&9.52e-07		&---			&4.15e-06		&1.85e-06		&---				\\ 
  10	&1.79e-06		&6.96e-07		&---			&3.53e-06		&1.55e-06		&---				\\ 
  11	&1.44e-06		&5.04e-07		&---			&3.11e-06		&1.03e-06		&---				\\ 
  12	&1.15e-06		&3.64e-07		&---			&2.37e-06		&6.84e-07		&---				\\ 
  13	&9.31e-07		&2.65e-07		&---			&1.99e-06		&5.19e-07		&---				\\ 
  14	&7.53e-07		&1.92e-07		&---			&1.70e-06		&4.11e-07		&---				\\ 
  15	&6.07e-07		&1.40e-07		&---			&1.34e-06		&3.19e-07		&---				
  \end{tabular}
  \caption{Numerical results of BACA for different $\theta$.} \label{tab:errEst}
  \end{table}
  The numerical results confirm the theoretical findings of Lemma~\ref{lem:rel} and Lemma~\ref{lem:eff}.
  The proposed error estimator~$\eta_k$ is reliable and $\frac{1}{2}\norm{W_kx_k}_2$ can be used as a lower bound on the
  residual error. 
  
  \subsection{Acceleration of the matrix approximation} \label{sec:52}
  The results  represent only a part of our aims of the extension of ACA. As already mentioned in the introduction, we are also interested in an acceleration of the matrix 
  approximation and an acceleration of the solution process.
  Since the computation of matrix entries is by far the most time-consuming
  part of the boundary element method and we have seen that BACA requires less entries, we can expect improved computational times.

  We consider again the family of boundary value problems~\eqref{eq:numEx}. The ranks of $\hat A_k$ are ahead of $A_k$ by two ACA steps.
  and we choose $\alpha = 100$.
  The ratio of the time required for BACA and the time of the ACA can be observed in the Figs.~\ref{fig:time1} and~\ref{fig:time2} for two different triangulations.
  \begin{figure}[htb]
  \begin{minipage}{0.49\textwidth}
  \begin{tikzpicture}
  \draw[line width=0.7pt, ->, >=latex] (-0.5,0)--(5.5,0);
  \draw[line width=0.7pt, ->, >=latex] (0,-0.5)--(0,4.5);
  \draw[line width=0.7pt, fill = white!75!black] (0,0)--(1,0)--(1,4)--(0,4)--(0,0);
  \draw[line width=0.7pt, fill = blue!33!green] (1,0)--(2,0)--(2,3)--(1,3)--(1,0);
  \draw[line width=0.7pt, fill = blue!22!green] (2,0)--(3,0)--(3,2.64)--(2,2.64)--(2,0);
  \draw[line width=0.7pt, fill = blue!11!green] (3,0)--(4,0)--(4,2.2)--(3,2.2)--(3,0);
  \draw[line width=0.7pt, fill = blue!0!green] (4,0)--(5,0)--(5,1.92)--(4,1.92)--(4,0);
  \draw[line width=0.7pt] (0.1,2)--(-0.1,2) node[left]{0.5};
  \draw[line width=0.7pt] (0.1,4)--(-0.1,4) node[left]{1};
  \node[rectangle] at (-0.8, 4.8) {\small{$\frac{\textnormal{time(BACA)}}{\textnormal{time(ACA)}}$}};
  \node[rectangle] at (0.5, 4.2) {\small{1.00}};
  \node[rectangle] at (1.5, 3.2) {\small{0.75}};
  \node[rectangle] at (2.5, 2.84) {\small{0.66}};
  \node[rectangle] at (3.5, 2.4) {\small{0.55}};
  \node[rectangle] at (4.5, 2.12) {\small{0.48}};
  \node[rectangle] at (0.5, -0.3) {\small{ACA}};
  \node[rectangle] at (1.5, -0.3) {\small{$p_{1}$}};
  \node[rectangle] at (2.5, -0.3) {\small{$p_{2}$}};
  \node[rectangle] at (3.5, -0.3) {\small{$p_{3}$}};
  \node[rectangle] at (4.5, -0.3) {\small{$p_{4}$}};
  \node[rectangle] at (5.5, -0.3) {\small{$p_{i}$}};
  \end{tikzpicture}
  \caption{Triangulation with 7\,168 triangles} \label{fig:time1}
  \end{minipage}
  \begin{minipage}{0.49\textwidth}
  \begin{tikzpicture}
  \draw[line width=0.7pt, ->, >=latex] (-0.5,0)--(5.5,0);
  \draw[line width=0.7pt, ->, >=latex] (0,-0.5)--(0,4.5);
  \draw[line width=0.7pt, fill = white!75!black] (0,0)--(1,0)--(1,4)--(0,4)--(0,0);
  \draw[line width=0.7pt, fill = blue!33!green] (1,0)--(2,0)--(2,3)--(1,3)--(1,0);
  \draw[line width=0.7pt, fill = blue!22!green] (2,0)--(3,0)--(3,2.72)--(2,2.72)--(2,0);
  \draw[line width=0.7pt, fill = blue!11!green] (3,0)--(4,0)--(4,2.36)--(3,2.36)--(3,0);
  \draw[line width=0.7pt, fill = blue!0!green] (4,0)--(5,0)--(5,2.00)--(4,2.00)--(4,0);
  \draw[line width=0.7pt] (0.1,2)--(-0.1,2) node[left]{0.5};
  \draw[line width=0.7pt] (0.1,4)--(-0.1,4) node[left]{1};
  \node[rectangle] at (-0.8, 4.8) {\small{$\frac{\textnormal{time(BACA)}}{\textnormal{time(ACA)}}$}};
  \node[rectangle] at (0.5, 4.2) {\small{1.00}};
  \node[rectangle] at (1.5, 3.2) {\small{0.75}};
  \node[rectangle] at (2.5, 2.92) {\small{0.68}};
  \node[rectangle] at (3.5, 2.56) {\small{0.59}};
  \node[rectangle] at (4.5, 2.2) {\small{0.50}};
  \node[rectangle] at (0.5, -0.3) {\small{ACA}};
  \node[rectangle] at (1.5, -0.3) {\small{$p_{1}$}};
  \node[rectangle] at (2.5, -0.3) {\small{$p_{2}$}};
  \node[rectangle] at (3.5, -0.3) {\small{$p_{3}$}};
  \node[rectangle] at (4.5, -0.3) {\small{$p_{4}$}};
  \node[rectangle] at (5.5, -0.3) {\small{$p_{i}$}};
  \end{tikzpicture}
  \caption{Triangulation with 28\,672 triangles} \label{fig:time2}
  \end{minipage}
  \end{figure}
    The two figures show that we obtain a lower time consumption even in the case of $p_1$, where the structural differences in the right-hand side are low. If the
  singularity is moved closer to the boundary of $\Omega$, BACA speeds up continuously. 
  
  One of the biggest advantages of ACA is its linear logarithmic complexity. 
  In order to observe this behaviour, the sphere discretised with 642 points and 1280 triangles is considered.
  We keep the resulting geometry and increase the number of triangles $N$ for the
  boundary value problem with $p_1$.
  The time columns of Tab.~\ref{tab:NumBall} show the expected complexity of ACA and also of the 
  new BACA. The growth factor of $N$ is four and the growth factor of the computational time is between five and six.
  
     \begin{table}[htb] \centering
   \scalebox{1.0}{ 
  \begin{tabular}{  r  | c | c r | c  r }
    $N$     & 	 \multicolumn{3}{c}{BACA}	 & 	\multicolumn{2}{|c}{ACA}		 \\
		& $\norm{b - Ax_k}_2$ 	& $e_h$  &time (s) &$e_h$	& time (s)	\\  \hline 
  1\,280	&1.58e-08	&0.002		&0.27		&0.002	 &0.31 	\\ [2pt]
  5\,120	&1.49e-08	&0.002		&1.46		&0.002	 &1.83	\\ [2pt]
  20\,480	&1.08e-08	&0.003		&7.34		&0.003	 &10.20 \\ [2pt]
  81\,920	&4.22e-09	&0.004		&38.15		&0.004	 &54.15 \\ 
  \end{tabular}}
  \caption{Numerical results of BACA with $p_1$ and~$\varepsilon_{\text{BACA}} = 5 \cdot 10^{-8}$.}  \label{tab:NumBall}
  \end{table}
  
   \subsection*{Effects on partially overrefined surfaces}
  In the following tests we apply ACA and BACA to triangulations which are partially overrefined. The
  boundary of the ellipsoid $\Omega = \{ x \in \mathbb{R}^3 \ : \ x_1^2 + x_2^2 + x_3^2/9 = 1\}$ is more refined on the mid-region than on the remaining 
  geometry; 
  see Fig.~\ref{fig:mesh_ell}.
  
   \begin{figure}[htb] \centering
  \scalebox{0.3}{\includegraphics[angle=90]{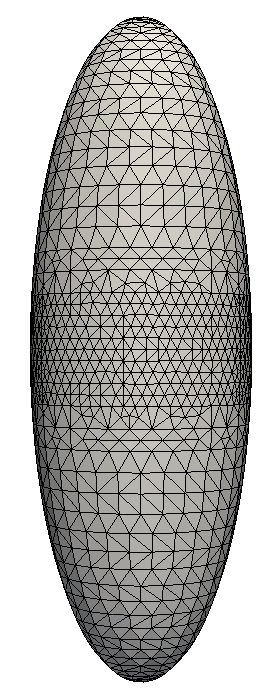}} 
  \scalebox{0.3}{\includegraphics[angle=90]{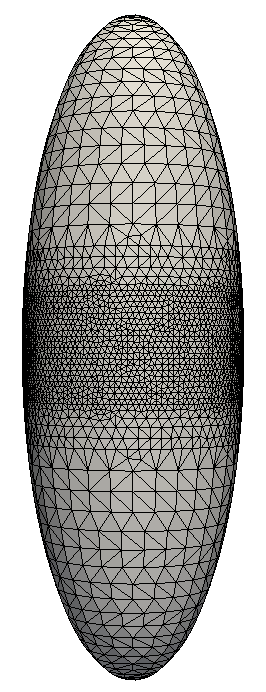}} 
  \scalebox{0.3}{\includegraphics[angle=90]{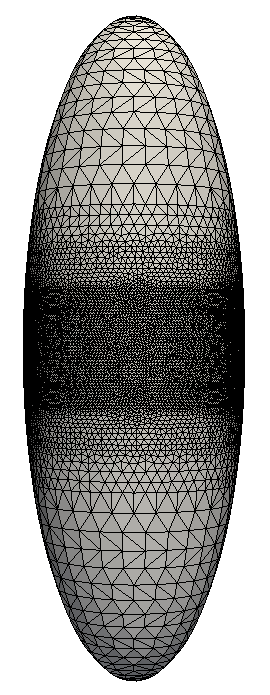}} 
  \scalebox{0.3}{\includegraphics[angle=90]{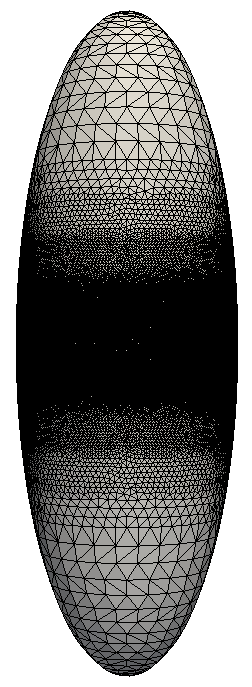}} 
   \caption{Meshes on the considered ellipsoid.} \label{fig:mesh_ell}
  \end{figure}
  
  The numerical results of BACA for the ellipsoid ($p_2$, $\varepsilon_{\text{BACA}} = 10^{-6}$) with a growing number of triangles are contained in 
  Tab.~\ref{tab:NumEll}.  The accuracy of the block approximation for ACA is $\varepsilon_{\text{ACA}} = 10^{-6}$. The minimal block size is also given
  in Tab.~\ref{tab:NumEll}, which we increase with the growing size of the problem. In order to compare the results, we keep the relative error 
  at the same level.
  
  Inspecting the time columns of Tabs.~\ref{tab:NumEll} and~\ref{tab:NumEll_ACA} reveals that BACA is significantly faster than ACA.
  For the largest problem BACA is more than twice as fast as ACA. 
  The reason for the acceleration is that the solution does not benefit from the local over-refinement. While the solver based on ACA is not able to detect this, the 
  combination of solution and error estimation used in BACA leads to lower number of CG iterations and computational time. 
 For the largest problem the number of CG iterations required for the solution based on ACA is more than twice as large as for the solution based on BACA.
  We obtain no significant 
  difference in the storage requirements after the application of the two algorithms. 
  Hence, the observed speed-up results from the coupling of the iterative solver and the error estimator rather than from the reduction of
  the storage requirements. In addition to these facts, a decreasing number of required iterations $k$ can be observed with a growing number of unknowns $N$. 
  
     \begin{table}[htb] \centering
   \scalebox{1.0}{
  \begin{tabular}{  r  c | c c|  c c  r r r  }
  $N$ 		&$b_\textnormal{min}$ &$\varepsilon_{\text{BACA}}$&$\norm{b - Ax_k}_2$& $e_h$	&$k$	&time (s) &CG 		&$\quad$storage	\\
		&	&	&		&	&	& 		&$\quad$iterations	& (MB)		\\ \hline 
  3\,452	&15	&1e-06 	&1.39e-06	&0.034	&10	&1.22		&134			&11.61		\\ 
  8\,574	&30	&1e-06	&1.16e-06	&0.023	&10	&4.89		&273			&42.10		\\ 
  30\,642	&60	&1e-06	&1.04e-06	&0.012	&6	&34.80		&500			&227.07		\\ 
  125\,948	&120	&1e-06	&1.21e-06	&0.006 	&6	&569.36		&1119			&1608.32	\\ 
  \end{tabular}}
  \caption{Numerical results BACA for the ellipsoid.} \label{tab:NumEll}
  \end{table} 
  
     \begin{table}[htb] \centering
   \scalebox{1.0}{
  \begin{tabular}{  r  c | c c  r r r  }
  $N$ 		&$b_\textnormal{min}$ & $e_h$	&$k$	&time (s) &CG 		&$\quad$storage	\\
		&	&	&	& 		&$\quad$iterations	& (MB)		\\ \hline 
  3\,452	&15	&0.034	&10	&1.54		&196			&13.67		\\ 
  8\,574	&30	&0.023	&10	&6.91		&371			&48.37		\\ 
  30\,642	&60	&0.012	&6	&66.63		&909			&257.59		\\ 
  125\,948	&120	&0.006 	&6	&1206.28	&2828			&1674.62	\\ 
  \end{tabular}}
  \caption{Numerical results ACA for the ellipsoid.} \label{tab:NumEll_ACA}
  \end{table}
  
  The ratio of the time required for ACA and for BACA can be seen in Fig.~\ref{fig:TimeQuo}. The curves stabilize at a 
  certain constant, which seems to depend on the geometry and the number and the location of refined areas.
  
  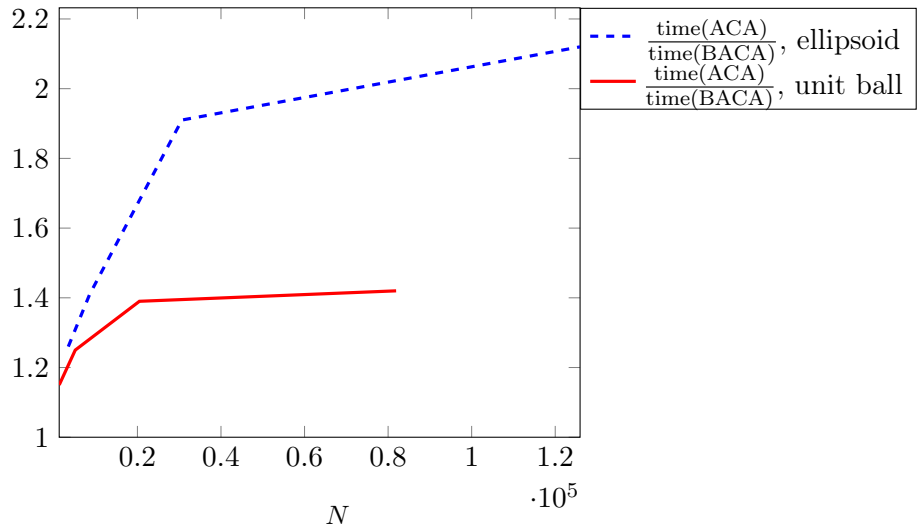
\begin{figure}[htb] \centering \scalebox{1.0}{
  \begin{tikzpicture}
  \begin{axis}[
       xmin = 1280, xmax =125948,
       ymin = 1,
  legend style = {at={(1,1)}, anchor = north west},
  no markers]
  \addplot[blue, very thick, dashed] coordinates{
  (3452,1.26)
  (8574,1.41)
  (30642,1.91)
  (125948,2.12)
  };
  \addlegendentry{$\frac{\text{time(ACA)}}{\text{time(BACA)}}$, ellipsoid}
  \addplot[red, very thick] coordinates{
  (1280, 1.15)
  (5120, 1.25)
  (20480, 1.39)
  (81920, 1.42)
  };
  \addlegendentry{$\frac{\text{time(ACA)}}{\text{time(BACA)}}$, unit ball}
  \end{axis}
  \end{tikzpicture}
  }\vspace*{-0.2cm}\\
  \hspace*{-3.2cm}\small{$N$}\\
  \caption{The ratio of the time required for BACA and for ACA for two geometries.} \label{fig:TimeQuo}
  \end{figure}

\end{document}